\setlist[enumerate]{leftmargin=.5in}
\setlist[itemize]{leftmargin=.5in}
\crefname{hypothesis}{Hypothesis}{Hypotheses}
\title{Permanence of Weakly Reversible Mass-Action Systems\\ with a Single Linkage Class\thanks{
\funding{The work of B. Boros was supported by the Austrian Science Fund (FWF), project
P28406.}}}
\author{Bal\'azs Boros\thanks{Faculty of Mathematics, University of Vienna, Vienna, Austria 
  (\email{balazs.boros@univie.ac.at}, \url{https://web.cs.elte.hu/\string~bboros/}).}
\and Josef Hofbauer\thanks{Faculty of Mathematics, University of Vienna, Vienna, Austria 
  (\email{josef.hofbauer@univie.ac.at}, \email{https://homepage.univie.ac.at/josef.hofbauer/}).}}
\DeclareMathOperator{\grad}{grad}
\newcommand*\diff{\mathop{}\!\mathrm{d}}
\crefname{lemma}{Lemma}{Lemmata}
\crefname{theorem}{Theorem}{Theorems}
\Crefname{lemma}{L}{Ls}   
\Crefname{theorem}{T}{Ts} 
\begin{document}

\maketitle

\begin{abstract}
We give a new proof of the fact that each weakly reversible mass-action system with a single linkage class is permanent.
\end{abstract}

\begin{keywords}
  mass-action system, entropy, permanence
\end{keywords}

\begin{AMS}
  34C11, 34D23, 80A30, 92E20
\end{AMS}

\section{Introduction}

Horn and Jackson \cite{horn:jackson:1972} considered the class of weakly reversible mass-action systems and they showed that complex balanced equilibria are unique and asymptotically stable relative to their positive stoichiometric class by providing a global Lyapunov function. However, existence of complex balanced equilibria is a rare situation. On the other hand, every weakly reversible mass-action system has a positive equilibrium in each positive stoichiometric class \cite{boros:2019}. In general, little can be said about the number of equilibria and their stability. It is natural to ask whether other (more global) dynamical properties hold for weakly reversible mass-action systems. For example whether all species coexist for all times and their concentrations remain bounded. These ideas have been formalized as the concepts of persistence, boundedness, and permanence, and several conjectures have been formulated for weakly reversible (and even more general classes, like endotactic) mass-action systems, see e.g.\ \cite{craciun:dickenstein:shiu:sturmfels:2009}, \cite{anderson:2011a}, \cite{anderson:2011b}, \cite{craciun:nazarov:pantea:2013}, \cite{pantea:2012}, \cite{gopalkrishnan:miller:shiu:2013}, \cite{gopalkrishnan:miller:shiu:2014}, and \cite{anderson:cappelletti:kim:nguyen:2018}. Many of these papers use the classical, entropy-like Lyapunov function by Horn and Jackson. In particular, Anderson shows persistence and boundedness for weakly reversible mass-action systems with a single linkage class (see \cite{anderson:2011a} and \cite{anderson:2011b}), Gopalkrishnan, Miller, and Shiu extend this to show permanence for the even larger class of strongly endotactic mass-action systems (see \cite{gopalkrishnan:miller:shiu:2013} and \cite{gopalkrishnan:miller:shiu:2014}), and Anderson, Cappelletti, Kim, and Nguyen give further insight (see \cite{anderson:cappelletti:kim:nguyen:2018}).

In the present paper, we provide a new proof for the permanence of weakly reversible mass-action systems with a single linkage class. We simplify the previous approaches, and give a more concise and elementary proof of the fact that a variant of the classical Horn-Jackson function is a Lyapunov function near the boundary of the state space. We crucially use ideas from \cite{deng:feinberg:jones:nachman:2011} and \cite{boros:2019}. On the other hand, we demonstrate in \cref{sec:examples} that the method breaks down for weakly reversible networks with multiple linkage classes.

The rest of this paper is organized as follows. After introducing some notations and the necessary notions from chemical reaction network theory (CRNT) in \cref{sec:notations,sec:crnt}, respectively, we state the main result of this paper in \cref{sec:main}. We collect a series of lemmata in \cref{sec:lemmata} and prove the main result in \cref{sec:proof_main}. We illustrate the limitations of the method in \cref{sec:examples}. Finally, we display in \cref{sec:acyclic} the dependency of the statements in \cref{sec:main,sec:lemmata} via an acyclic digraph.

\section{Notations} \label{sec:notations}

We use standard notations.

The symbols $\mathbb{R}_+$ and $\mathbb{R}_{\geq0}$ denote the sets of positive and nonnegative real numbers, respectively. Accordingly, $\mathbb{R}_+^n$ and $\mathbb{R}_{\geq0}^n$ denote the positive and nonnegative orthants, respectively.

For the vectors $x\in\mathbb{R}^n_+$ and $y\in\mathbb{R}^n$, the notation $x^y$ is a shorthand for the monomial $\prod_{s=1}^nx_s^{y_s}$.

For a vector $z \in \mathbb{R}^m$, the smallest and largest value among the coordinates of $z$ are denoted by $\min z$ and $\max z$, respectively.

For a vector $x \in \mathbb{R}^n$ and a subset $W$ of $\{1,\ldots,n\}$, the vector $x_W\in\mathbb{R}^W$ is the restriction of $x$ to the indices in $W$.

For a set $W\subseteq\{1,\ldots,n\}$, we denote by $W^c$ the set $\{1,\ldots,n\}\setminus W$.

For a set $A\subseteq \mathbb{R}^n$, the symbols $\overline{A}$ and $\partial A$ denote the closure and the boundary of $A$, respectively.

\section{Mass-action systems} \label{sec:crnt}

We give a very brief introduction to the basic notions of CRNT. For more details, the reader is advised to consult e.g. \cite{feinberg:1979}, \cite{feinberg:1987}, and \cite{gunawardena:2003}.

A \emph{reaction network} is a triple $(\mathcal{X},\mathcal{C},\mathcal{R})$, where $\mathcal{X}$, $\mathcal{C}$, and $\mathcal{R}$ are the set of \emph{species}, \emph{complexes}, and \emph{reactions}, respectively. Throughout the paper, we use $n = |\mathcal{X}|$ and $m = |\mathcal{C}|$. The complexes are formal linear combinations of the species, the coefficients are stored in the matrix $Y = [y_1,\ldots,y_m] \in \mathbb{R}^{n \times m}$. The $i$th complex is then $(y_i)_1\mathsf{X}_1 +\cdots + (y_i)_n\mathsf{X}_n$, where $\mathsf{X}_1, \ldots,\mathsf{X}_n$ denote the species. The set $\mathcal{R}$ consists of ordered pairs of complexes, the first and the second element of the pair are called \emph{reactant} complex and \emph{product} complex, respectively.

The weak components of the digraph $(\mathcal{C},\mathcal{R})$ are called \emph{linkage classes}. The reaction network is said to be \emph{weakly reversible} if all the weak components of the digraph $(\mathcal{C},\mathcal{R})$ are strongly connected, i.e., for every pair $(i,j)$ of complexes, the existence of a directed path from $i$ to $j$ implies the existence of a directed path from $j$ to $i$. Most of the results in this paper are about weakly reversible networks with a single linkage class, i.e., the digraph $(\mathcal{C},\mathcal{R})$ is strongly connected. When having a single linkage class network, usually it is required that there are at least two complexes and all the complexes are distinct. However, we assume only that there exist at least two complexes that are distinct (thus, there might be $i,j\in\{1,\ldots,m\}$ such that $i\neq j$ and $y_i=y_j$). We make this weakening for convenience.

Denoting by $x(\tau) \in \mathbb{R}^n_+$ the \emph{concentration vector} of the species at time $\tau$, assuming \emph{mass-action kinetics}, the time evolution of the species concentration vector is described by the autonomous ordinary differential equation (ODE)
\begin{align*}
\dot{x}(\tau) = \sum_{(i,j)\in\mathcal{R}} \kappa_{ij}x(\tau)^{y_i}(y_j - y_i) \text{ with state space } \mathbb{R}^n_+,
\end{align*}
where $\kappa : \mathcal{R} \to \mathbb{R}_+$. The reason we defined the ODE in the positive orthant (not in the nonnegative orthant) is that we allow negative entries in $Y$. The quadruple $(\mathcal{X},\mathcal{C},\mathcal{R},\kappa)$ is called a \emph{mass-action system}.

In fact, we will consider a slightly more general family of differential equations. Namely, the non-autonomous ODE
\begin{align} \label{eq:ODE}
\dot{x}(\tau) = \sum_{(i,j)\in\mathcal{R}} \kappa_{ij}(\tau)x(\tau)^{y_i}(y_j - y_i) \text{ with state space } \mathbb{R}^n_+,
\end{align}
with an $0<\varepsilon<1$ such that $\varepsilon\leq\kappa_{ij}(\tau)\leq 1/\varepsilon$ for all $(i,j)\in\mathcal{R}$ and for all $\tau\geq0$. The quadruple $(\mathcal{X},\mathcal{C},\mathcal{R},\kappa(\tau))$ is called a \emph{mass-action system with bounded kinetics}.

The linear subspace $\mathcal{S}$ of $\mathbb{R}^n_+$, spanned by the set $\{y_i-y_j~|~(i,j)\in\mathcal{R}\}$, is called the \emph{stoichiometric subspace}. Clearly, the translations of $\mathcal{S}$ are forward invariant under the ODE \eqref{eq:ODE}. The sets $(p+\mathcal{S}) \cap \mathbb{R}^n_+$ for $p \in \mathbb{R}^n_+$ are called \emph{positive stoichiometric classes}. The relevant approach is to study the dynamics within a fixed positive stoichometric class.

\section{Main result}
\label{sec:main}

The formal definition of permanence is the following.

\begin{definition}
A mass-action system with bounded kinetics is said to be \emph{permanent} on the positive stoichiometric class $\mathcal{P}$ if there exists a compact set $K\subseteq \mathcal{P}$ with the property that for each solution $\tau\mapsto x(\tau)$ with $x(0)\in\mathcal{P}$, there exists a $\tau_0\geq0$ such that for all $\tau\geq\tau_0$ we have $x(\tau)\in K$.
\end{definition}

Results on permanence of planar systems include \cite{simon:1995}, \cite{craciun:nazarov:pantea:2013}, \cite{brunner:craciun:2018}, and \cite{boros:hofbauer:2019}, while in the papers \cite{anderson:2011b} and \cite{gopalkrishnan:miller:shiu:2014} there is no restriction on the number of species. The \emph{Permanence Conjecture} states that all weakly reversible mass-action systems with bounded kinetics are permanent in each positive stoichiometric class. In \cite{gopalkrishnan:miller:shiu:2013} and \cite{gopalkrishnan:miller:shiu:2014}, the conjecture is proven in the single linkage class case, see \cite[Theorem 1.3]{gopalkrishnan:miller:shiu:2014}, and the same result follows from \cite[Theorem 5.5]{anderson:cappelletti:kim:nguyen:2018} in case the rate constants do not depend on time. In this paper, we give a more concise proof of the single linkage class case of the conjecture.

Next, we state our main result as \cref{thm:main}. Its proof is carried out in \cref{sec:proof_main}, after collecting a series of lemmata in \cref{sec:lemmata}.

\begin{theorem} \label{thm:main}
Let $(\mathcal{X},\mathcal{C},\mathcal{R},\kappa(\tau))$ be a weakly reversible, single linkage class mass-action system with bounded kinetics.  Fix a positive stoichiometric class $\mathcal{P}$. Then
\begin{itemize}
\item[(i)] there exists a compact and forward invariant set $K\subseteq \mathcal{P}$ with the property that each solution starting in $\mathcal{P}$ enters $K$ in finite time and
\item[(ii)] the system, restricted to $\mathcal{P}$, is permanent.
\end{itemize}
\end{theorem}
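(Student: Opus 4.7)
The plan is to construct a proper Lyapunov-type function $V$ on $\mathcal{P}$ that is strictly decreasing along every solution of \eqref{eq:ODE} outside a compact subset of $\mathcal{P}$, uniformly in the admissible rate functions. Once such a $V$ is in hand, (i) follows by taking $K$ to be a sufficiently large sublevel set of $V$, and (ii) is immediate from (i). For $V$ I would adopt the classical Horn-Jackson relative entropy
\[
V(x) = \sum_{s=1}^n \bigl( x_s \log(x_s/a_s) - x_s + a_s \bigr)
\]
with respect to a reference point $a\in\mathcal{P}$, for example the positive equilibrium of the time-autonomous system with some fixed choice of rates $\kappa_{ij}\in[\varepsilon,1/\varepsilon]$ (existence on $\mathcal{P}$ is guaranteed by \cite{boros:2019}), modified if necessary by a mild boundary-repelling term so that its sublevel sets in $\mathcal{P}$ are genuinely compact.

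Differentiating $V$ along \eqref{eq:ODE} by the chain rule and the definition of mass-action kinetics gives
\[
\dot V(x) = \sum_{(i,j)\in\mathcal{R}} \kappa_{ij}(\tau)\, x^{y_i}\, \bigl\langle y_j - y_i,\, \log(x/a)\bigr\rangle.
\]
The heart of the proof is to establish that there exist constants $\rho>0$ and $R>0$, independent of $\kappa(\tau)$, such that $\dot V(x) \le -\rho$ whenever $V(x) > R$. I would argue this by a tropical/asymptotic analysis in the spirit of \cite{deng:feinberg:jones:nachman:2011} and \cite{boros:2019}: along any sequence $x^{(k)}\in\mathcal{P}$ with $V(x^{(k)}) \to \infty$, pass to a subsequence so that $\log(x^{(k)}/a)/\|\log(x^{(k)}/a)\|$ converges to a unit vector $w$. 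Let $y_{i^*}$ maximize $\langle y_i, w\rangle$ over the complexes; then $x^{(k)\, y_{i^*}}$ is the dominant monomial. Since the reaction graph $(\mathcal{C},\mathcal{R})$ is strongly connected and contains at least two distinct complexes, there must exist a reaction $(i^*,j)$ with $\langle y_j - y_{i^*}, w\rangle < 0$. The corresponding summand of $\dot V$ then behaves like $\kappa_{i^*j}(\tau)\, x^{(k)\, y_{i^*}}\, \langle y_j - y_{i^*},\log(x^{(k)}/a)\rangle\to -\infty$, dominating all other terms (which are either smaller monomials, or of the same monomial but with uniformly bounded bracket); thus $\dot V(x^{(k)}) \to -\infty$.

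The main obstacle is that the maximizing complex $y_{i^*}$ need not be unique: a whole face of the convex hull of $\{y_1,\ldots,y_m\}$ may realize the maximum, and intra-face reactions can then contribute positive terms of the dominant order that might in principle cancel the negative gain. This is precisely where the \emph{single} linkage class hypothesis enters: strong connectivity of the full reaction graph forces at least one reaction to leave any such face towards a strictly lower value of $\langle \cdot, w\rangle$, and a careful count of magnitudes shows this outgoing edge wins. Carrying this out rigorously requires a combinatorial analysis of how the reactions within and across the maximizing face interact with the asymptotic sizes of the monomials $x^{y_i}$, which I expect to be organized as the series of lemmas in \cref{sec:lemmata}. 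Given the uniform bound $\dot V \le -\rho$ on $\{V > R\}$, the set $K := \{V \le R\}$ is compact, forward invariant, and absorbs every trajectory in finite time, which yields (i) and hence (ii).
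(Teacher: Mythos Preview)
Your tropical argument is sound for the behaviour at infinity: along any sequence with $V(x^{(k)})\to\infty$ one does get $\dot V(x^{(k)})\to-\infty$, and this is essentially \cref{lem:boundedness}. The gap is at the \emph{finite} boundary $\overline{\mathcal{P}}\cap\partial\mathbb{R}^n_{\geq 0}$. The Horn--Jackson entropy extends continuously to $\mathbb{R}^n_{\geq 0}$, so $V$ is bounded near every finite boundary point; your hypothesis $V(x^{(k)})\to\infty$ therefore never engages there, and every sublevel set $\{V\le R\}$ in $\mathcal{P}$ touches $\partial\mathbb{R}^n_{\geq 0}$ and fails to be compact. You acknowledge this and propose to add a ``mild boundary-repelling term'', but that is precisely where the real work lies, and the derivative you analyse is that of the unmodified $V$. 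Moreover, no uniform bound $\dot V\le-\rho$ can hold near the finite boundary: already for the single reversible pair $2\mathsf{X}\leftrightarrow\mathsf{X}+\mathsf{Y}$ on the class $\{x+y=c\}$, near the boundary point $(0,c)$ one has $\dot V=(\log\tfrac{x}{a_1}-\log\tfrac{y}{a_2})\,x\,(\kappa_2 y-\kappa_1 x)\sim \kappa_2 c\cdot x\log x\to 0^{-}$. The dominant monomial $x^{y_{i^*}}$ itself vanishes, and its product with the logarithmic factor tends to zero rather than $-\infty$, so the dominant-monomial heuristic does not deliver a uniform negative rate there.

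The paper does not attempt a single global Lyapunov function. For each boundary point $\overline{x}$ with zero set $W=\{s:\overline{x}_s=0\}$ it uses the \emph{partial} entropy $V_W(x)=\sum_{s\in W}x_s(\log x_s-1)+1$ (\cref{lem:V_W}): near $\overline{x}$ the coordinates in $W^c$ stay bounded away from $0$ and $\infty$ and only rescale the rate constants, so the problem reduces to a \emph{projected} single-linkage-class system in the $W$-variables near its origin, where \cref{lem:negative} again applies. The compact absorbing set $K$ is then assembled in \cref{sec:proof_main} by iteratively excising neighbourhoods of the faces of $\overline{\mathcal{P}}$, dimension by dimension (vertices, then edges, and so on), invoking the appropriate $V_W$ together with \cref{lem:unif_repel_origin} at each stage. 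Your sketch supplies the ``at infinity'' half of this construction; for the finite boundary you would need either this face-by-face procedure with the family $\{V_W\}$, or an explicit single proper Lyapunov function together with a fresh analysis of its derivative near every boundary stratum---neither of which is in the proposal.
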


By forward invariance in $(i)$ of the above theorem, we mean that for all $\tau_0\geq 0$, $x(\tau_0)\in K$ implies $x(\tau)\in K$ for all $\tau\geq\tau_0$. Note that $(i)$ immediately implies $(ii)$. Furthermore, for autonomous systems, $(i)$ and $(ii)$ are equivalent, because a compact set that all solutions enter can be enlarged to a compact and forward invariant set. However, for non-autonomous systems, $(i)$ is a much stronger property than $(ii)$.


\section{Lemmata} \label{sec:lemmata}


In this section, we collect a series of lemmata, the final two, \cref{lem:unif_boundedness,lem:unif_repel_origin} will be used in \cref{sec:proof_main} to prove \cref{thm:main}.

We start by an elementary lemma. For an equivalent formulation (along with its proof), see \cite[Lemma 7]{boros:2019}. The origin of the latter is \cite[Lemma 3.2]{deng:feinberg:jones:nachman:2011}.
\begin{lemma} \label{lem:elementary}
Let $p \geq 1$ and $w_0 = 1$. Then
\begin{align*}
\sup_{0<w_1,\ldots,w_{p-1} \leq 1}\left(\sum_{i=1}^p w_{i-1} \log w_i\right) \to -\infty \text{ as } w_p\to 0.
\end{align*}
\end{lemma}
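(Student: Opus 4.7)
The plan is to argue by induction on $p$, using the natural recursive structure of the expression. When $p = 1$ the constraint $w_0 = 1$ reduces the sum to $\log w_1 = \log w_p$, which tends to $-\infty$ as $w_p \to 0$ with no variables to optimize over, so the base case is immediate.

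For the inductive step, introduce the shorthand $\Phi_p(u) := \sup_{0<w_1,\ldots,w_{p-1}\le 1} \sum_{i=1}^p w_{i-1}\log w_i$, where $w_p = u$. The key observation is the simple decomposition
\[
\sum_{i=1}^p w_{i-1}\log w_i \;=\; \sum_{i=1}^{p-1} w_{i-1}\log w_i \;+\; w_{p-1}\log u,
\]
in which the first sum is precisely the expression from the lemma for index $p-1$, with $w_{p-1}$ playing the role of the new endpoint. Taking the supremum over $w_1,\ldots,w_{p-2}$ first gives
\[
\Phi_p(u) \;=\; \sup_{0 < w \le 1}\bigl[\Phi_{p-1}(w) + w\log u\bigr].
\]
Since each term $w_{i-1}\log w_i$ is nonpositive on $(0,1]$, we also have the free bound $\Phi_{p-1}(w)\le 0$ throughout.

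Given a target $M>0$, I would split the one-dimensional supremum at a threshold $\delta\in(0,1)$ chosen via the inductive hypothesis so that $\Phi_{p-1}(w) < -M$ for all $w\in(0,\delta]$. On that range the product $w\log u$ is nonpositive, so the bracket is strictly less than $-M$. On the complementary interval $w\in(\delta,1]$, the free bound $\Phi_{p-1}(w)\le 0$ together with $\log u < 0$ gives $w\log u \le \delta\log u$, so the bracket is at most $\delta\log u$. Choosing $u$ small enough that $\delta\log u<-M$ (that is, $u < e^{-M/\delta}$) forces $\Phi_p(u) < -M$, completing the induction.

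The substantive point — really the only point that requires any care — is the order of the quantifiers. The threshold $\delta$ must be selected after $M$ but before $u$, so that the inductive hypothesis delivers a \emph{uniform} bound on $\Phi_{p-1}$ over the entire shrunken interval $(0,\delta]$; only then can the remaining range $(\delta,1]$ be annihilated by taking $u$ small. Once this order is respected, the two-case estimate above is mechanical and does not require any smoothness or monotonicity properties of $\Phi_{p-1}$.
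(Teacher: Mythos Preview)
Your argument is correct. The induction on $p$ with the recursion $\Phi_p(u) = \sup_{0<w\le 1}[\Phi_{p-1}(w) + w\log u]$ is valid, and the two-case split at the threshold $\delta$ supplied by the inductive hypothesis is handled properly (the only cosmetic point is that the conclusion should read $\Phi_p(u)\le -M$ rather than $<-M$, since a supremum of values each below $-M$ need only be $\le -M$; this is immaterial for the limit statement).

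As for comparison: the paper does not give its own proof of this lemma. It states the result and refers the reader to \cite[Lemma~7]{boros:2019} for an equivalent formulation together with a proof (tracing the idea back to \cite[Lemma~3.2]{deng:feinberg:jones:nachman:2011}). So there is no in-paper argument to compare against; your self-contained induction is a perfectly acceptable substitute.
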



The following lemma (that builds on \cref{lem:elementary}) is one of the key observations. It is analogous to \cite[Lemma 8]{boros:2019} and \cite[Lemma 3.3]{deng:feinberg:jones:nachman:2011}.
\begin{lemma} \label{lem:negative}
Let $(\mathcal{X},\mathcal{C},\mathcal{R},\kappa(\tau))$ be a weakly reversible, single linkage class mass-action system with bounded kinetics. Let us denote the relative interior of the $(m-1)$-dimensional simplex in $\mathbb{R}^m_{\geq0}$ by $\Delta^\circ$, i.e.,
\begin{align*}
\Delta^\circ = \{z\in\mathbb{R}^m_+~|~z_1 + \cdots + z_m = 1\}.
\end{align*}
Further, let us define the function $g:[0,\infty)\times \Delta^\circ \to \mathbb{R}$ by
\begin{align*}
g(\tau,z) = \sum_{(i,j)\in\mathcal{R}}\kappa_{ij}(\tau)z_i(\log z_j-\log z_i)\text{ for } (\tau,z) \in [0,\infty)\times \Delta^\circ.
\end{align*}
Then for all $K>0$ there exists a $\delta > 0$ such that
\begin{align*}
g(\tau,z) \leq -K \text{ whenever }\min z\leq\delta.
\end{align*}
\end{lemma}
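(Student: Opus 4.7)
The plan is to bound $g(\tau,z)$ by the contribution of reactions along a single directed path in $(\mathcal{C},\mathcal{R})$ connecting a complex where $z$ is largest to one where it is smallest, and then invoke Lemma \ref{lem:elementary} along that path.

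I would begin by splitting
$$g(\tau,z)=\sum_{(i,j)\in\mathcal{R}}\kappa_{ij}(\tau)z_i\log z_j\;-\;\sum_{(i,j)\in\mathcal{R}}\kappa_{ij}(\tau)z_i\log z_i.$$
The second sum is bounded in absolute value uniformly in $(\tau,z)$, since $|x\log x|\leq 1/e$ on $(0,1]$ and $\kappa_{ij}(\tau)\leq 1/\varepsilon$. So it suffices to drive the first sum to $-\infty$ as $\min z\to 0$. Because $z_j\leq 1$ forces every term $\kappa_{ij}(\tau)z_i\log z_j\leq 0$, discarding any subset of the terms only weakens the bound. I would pick an index $k$ with $z_k=\min z$ and an index $\ell$ with $z_\ell=\max z\geq 1/m$; by strong connectivity of $(\mathcal{C},\mathcal{R})$ I can fix a simple directed path $\ell=i_0\to i_1\to\cdots\to i_p=k$ with $1\leq p\leq m-1$. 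Using $\kappa_{ij}(\tau)\geq\varepsilon$,
$$\sum_{(i,j)\in\mathcal{R}}\kappa_{ij}(\tau)z_i\log z_j\;\leq\;\varepsilon\sum_{s=1}^{p}z_{i_{s-1}}\log z_{i_s}.$$

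To apply Lemma \ref{lem:elementary}, I would prepend a fictitious $w_0=1$, setting $w_s=z_{i_{s-1}}$ for $s=1,\ldots,p+1$, so that $w_{p+1}=z_k$. A direct reindexing yields
$$\sum_{s=1}^{p}z_{i_{s-1}}\log z_{i_s}\;=\;\sum_{i=1}^{p+1}w_{i-1}\log w_i\;-\;\log z_\ell\;\leq\;\sum_{i=1}^{p+1}w_{i-1}\log w_i\;+\;\log m.$$
Lemma \ref{lem:elementary}, applied with chain length $p+1$, says that $\sup_{w_1,\ldots,w_p\in(0,1]}\sum_{i=1}^{p+1}w_{i-1}\log w_i\to-\infty$ as $w_{p+1}\to 0$. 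Hence as $z_k\to 0$ the path sum becomes arbitrarily negative, uniformly in the remaining coordinates of $z$ and in $\tau$. Because there are only finitely many ordered pairs $(\ell,k)$ (and I may fix one simple path for each), a single $\delta>0$ works across all of them to ensure $g(\tau,z)\leq-K$.

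The main obstacle is the mismatch between Lemma \ref{lem:elementary}'s hypothesis $w_0=1$ and the fact that in our application the head of the relevant path carries only $z_\ell\in[1/m,1]$. Prepending the unit and absorbing the harmless surplus $-\log z_\ell\leq\log m$ into a constant is the key maneuver. The rest is bookkeeping: ensuring all constants are uniform in $\tau$ (which is automatic from the two-sided bound on $\kappa_{ij}(\tau)$) and taking the minimum of finitely many $\delta$'s obtained from Lemma \ref{lem:elementary} over the possible path lengths.
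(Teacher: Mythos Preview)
Your argument is correct and follows essentially the same strategy as the paper: bound the $z_i\log z_i$ part by a constant, discard the nonpositive non-path terms from the $z_i\log z_j$ part, and apply Lemma~\ref{lem:elementary} along a directed path from a maximal to a minimal coordinate. The only difference is cosmetic: the paper normalizes by $\max z$ (setting $w_i=z_{\pi(i)}/\max z$ so that $w_0=1$ automatically and $w_p=\min z/\max z\leq m\delta$), whereas you keep the raw $z$-values, prepend an artificial $w_0=1$, and absorb the resulting surplus $-\log z_\ell\leq\log m$ as a constant; both devices achieve the same end.
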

\begin{proof}
Since
\begin{align*}
g(\tau,z)=\max z \sum_{(i,j)\in\mathcal{R}} \kappa_{ij}(\tau)\frac{z_i}{\max z}\left[\log \frac{z_j}{\max z}-\log \frac{z_i}{\max z}\right],
\end{align*}
for any $\mathcal{R}' \subseteq \mathcal{R}$ we have
\begin{align*}
g(\tau,z) = (\max z)(A + B + C),
\end{align*}
where
\begin{align*}
A &= \sum_{(i,j)\in\mathcal{R}'} \kappa_{ij}(\tau)\frac{z_i}{\max z}\log \frac{z_j}{\max z}, \\
B &= \sum_{(i,j)\in\mathcal{R}\setminus\mathcal{R}'}  \kappa_{ij}(\tau)\frac{z_i}{\max z}\log \frac{z_j}{\max z}, \\
C &= \sum_{(i,j)\in\mathcal{R}}  \kappa_{ij}(\tau)\frac{z_i}{\max z}\left(-\log \frac{z_i}{\max z}\right).
\end{align*}
Since $\max z \geq \frac{1}{m}$, our goal is to show that for each $K>0$, if $\delta>0$ is small enough then for each $z \in \Delta^\circ$ with $\min z\leq\delta$, one can choose $\mathcal{R}'$ such that $A+B+C\leq -mK$. Clearly, $B \leq 0$, because each term in the sum is nonpositive. Also, $C \leq \frac{|\mathcal{R}|}{\varepsilon}$, because $x(-\log x)\leq 1$ for all $0<x\leq1$. To estimate $A$ from above, we will use \cref{lem:elementary}.

Let $w_0 = 1$ and for fixed $p \in \{1,2,\ldots,m-1\}$, let $0<a_p<1$ be such that
\begin{align*}
\sum_{i=1}^p w_{i-1}\log w_i \leq \frac{-mK - \frac{|\mathcal{R}|}{\varepsilon}}{\varepsilon} \text{ for all }0<w_1,\ldots,w_{p-1}\leq 1 \text{ and for all } 0<w_p\leq a_p.
\end{align*}
Finally, let $\delta = \frac{1}{m}\min(a_1,\ldots,a_{m-1})$. We will show in the rest of this proof that $A+B+C\leq -mK$ for all $z \in \Delta^\circ$ with $\min z\leq \delta$.

Fix $z \in \Delta^\circ$ with $\min z \leq \delta$. Then there exist $k$ and $l$ such that $z_k = \max z$ and $z_l = \min z$. The digraph $(\mathcal{C},\mathcal{R})$ is assumed to be strongly connected, therefore, there exists a directed path from $k$ to $l$. Let $\mathcal{R}'$ be the edge set of this directed path, denote by $p\geq1$ the length of this path, and let $\pi(0),\pi(1),\ldots,\pi(p)$ be the enumeration of the vertices visited while travelling from $k$ to $l$ (thus $\pi(0)=k$ and $\pi(p)=l$). Further, let $w_i = \frac{z_{\pi(i)}}{\max z}$ ($i = 0,1, \ldots,p$). With this, $w_0 = 1$, $0<w_1,\ldots,w_p\leq 1$, and
\begin{align*}
A = \sum_{i=1}^p \kappa_{ij}(\tau)w_{i-1}\log w_i \leq \varepsilon\sum_{i=1}^p w_{i-1}\log w_i.
\end{align*}
Then, since $0<w_p = \frac{\min z}{\max z}\leq m\delta \leq a_p$, we have
\begin{align*}
A + B + C \leq \varepsilon\frac{-mK - \frac{|\mathcal{R}|}{\varepsilon}}{\varepsilon} + 0 + \frac{|\mathcal{R}|}{\varepsilon} = -mK.
\end{align*}
This concludes the proof.
\end{proof}


The following lemma, despite its simplicity, plays a crucial role in the sequel. Several versions appear in the CRNT literature, starting with \cite[Lemma 4B]{horn:jackson:1972}. See \cite[Appendix A]{boros:2019} for a list of further appearances.
\begin{lemma} \label{lem:birch}
Let $(\mathcal{X},\mathcal{C},\mathcal{R})$ be a reaction network and $\mathcal{R}'\subseteq\mathcal{R}$ be such that the set of vectors $\{y_j - y_i~|~(i,j) \in \mathcal{R}'\}$ forms a basis of $\mathcal{S}$, hence, $|\mathcal{R}'|=\dim \mathcal{S}$. For a fixed positive stoichiometric class $\mathcal{P}$, let us define $\Psi:\mathcal{P} \to \mathbb{R}^{\dim\mathcal{S}}_+$ by
\begin{align*}
\Psi(x) = \left(\frac{x^{y_j}}{x^{y_i}}\right)_{(i,j) \in \mathcal{R}'} \text{ for }x \in \mathcal{P}.
\end{align*}
Then $\Psi$ is a homeomorphism between $\mathcal{P}$ and $\mathbb{R}^{\dim\mathcal{S}}_+$.
\end{lemma}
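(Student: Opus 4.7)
The plan is to take logarithms and reduce the statement to a Birch-type theorem. Writing $x^{y_j}/x^{y_i} = x^{y_j - y_i}$ and letting $M$ be the $(\dim\mathcal{S})\times n$ matrix whose rows are $(y_j - y_i)^\top$ for $(i,j) \in \mathcal{R}'$, we have $\log \Psi(x) = M\log x$ (componentwise). The assumption on $\mathcal{R}'$ means $M$ has image $\mathbb{R}^{\dim\mathcal{S}}$ and kernel $\mathcal{S}^\perp$; since componentwise $\log$ is a homeomorphism $\mathbb{R}_+^{\dim\mathcal{S}} \to \mathbb{R}^{\dim\mathcal{S}}$, the claim reduces to showing that for every $\eta \in \mathbb{R}^n$ there is a unique $x \in \mathcal{P}$ with $\log x - \eta \in \mathcal{S}^\perp$, depending continuously on $\eta$.

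Uniqueness is one line: if $x_1, x_2 \in \mathcal{P}$ both satisfy the condition, then $x_1 - x_2 \in \mathcal{S}$ and $\log x_1 - \log x_2 \in \mathcal{S}^\perp$ have zero inner product, but $\sum_s (x_{1,s}-x_{2,s})(\log x_{1,s}-\log x_{2,s}) \ge 0$ with equality only when $x_1 = x_2$. For existence, I would minimize the strictly convex, coercive function $F(x) = \sum_s (x_s \log x_s - \eta_s x_s)$ over the closure $\overline{\mathcal{P}}$ with the convention $0\log 0 := 0$. A minimizer $x^\ast$ exists by coercivity and continuity, and at any interior minimizer the first-order condition reads $\log x^\ast - \eta \in \mathcal{S}^\perp$, which is precisely the Birch equation. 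The main obstacle I anticipate is ruling out boundary minimizers: fix any reference $\bar{x} \in \mathcal{P}$; along the segment $x^\ast + t(\bar{x} - x^\ast)$ each coordinate $s$ with $x^\ast_s = 0$ contributes $t\bar{x}_s\log(t\bar{x}_s) + O(t)$, and this $t\log t$ term is strictly negative and dominates the $O(t)$ contributions from all other coordinates as $t \downarrow 0$, so $F$ strictly decreases off the boundary and no boundary point can be a minimizer.

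Continuity of $\Psi^{-1}$ can then be concluded either by invariance of domain (both $\mathcal{P}$ and $\mathbb{R}_+^{\dim\mathcal{S}}$ are topological manifolds of dimension $\dim\mathcal{S}$, and a continuous bijection between such is automatically a homeomorphism) or, more constructively, by checking that the derivative of $x \mapsto M\log x$ restricted to the tangent space $\mathcal{S}$ is injective at every $x \in \mathcal{P}$: if $s \in \mathcal{S}$ satisfies $M\diag(1/x)s = 0$, then $\diag(1/x)s \in \mathcal{S}^\perp$, and pairing with $s$ gives $\sum_i s_i^2/x_i = 0$, hence $s = 0$; the inverse function theorem then supplies a continuous (in fact smooth) local inverse which glues to a global one via the bijectivity just established.
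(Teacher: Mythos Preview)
Your proposal is correct and follows essentially the same route as the paper: both reduce bijectivity of $\Psi$ to the Birch-type statement that for each $\eta\in\mathbb{R}^n$ there is a unique $x\in\mathcal{P}$ with $\log x-\eta\in\mathcal{S}^\perp$, and then invoke this classical fact. The paper simply cites it as ``widely known'' and refers to the CRNT literature, whereas you supply a self-contained proof (convex minimization plus a $t\log t$ boundary argument, together with an inverse-function-theorem check of the homeomorphism); one cosmetic point is that $\nabla F(x)=\log x+\mathbf{1}-\eta$, so the first-order condition yields $\log x-(\eta-\mathbf{1})\in\mathcal{S}^\perp$, but since you quantify over all $\eta$ this shift is harmless.
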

\begin{proof}
Since $\frac{x^{y_j}}{x^{y_i}} = e^{\langle y_j - y_i, \log x \rangle}$, the bijectivity of the map $\Psi$ is equivalent to the bijectivity of the map $\log \circ \Psi:\mathcal{P}\to\mathbb{R}^{\dim\mathcal{S}}$, and the latter is equivalent to the following statement. For each $x^* \in \mathbb{R}^n$ there exists a unique $x \in \mathcal{P}$ such that $\log x - \log x^* \in \mathcal{S}^\perp$. The latter statement is widely known to be true, and thus, the bijectivity of $\Psi$ follows. It is also immediate that $\Psi$ is even a homeomorphism.
\end{proof}

Next, we prove that close enough to the boundary of a positive stoichiometric class, there exists a monomial whose value is significantly smaller than the sum of the values of the other monomials.
\begin{lemma} \label{lem:U_M}
Let $(\mathcal{X},\mathcal{C},\mathcal{R})$ be a reaction network with a single linkage class, $\mathcal{P}$ a positive stoichiometric class, and $\Psi:\mathcal{P}\to\mathbb{R}^{\dim\mathcal{S}}_+$ the homeomorphism in \cref{lem:birch}. Further, for $i \in \mathcal{C}$ let $z_i$ denote the quantity $\frac{x^{y_i}}{\sum_{k\in\mathcal{C}}x^{y_k}}$. Finally, for an $M>1$ let $U_M$ denote the cube $\left[\frac{1}{M},M\right]^{\dim\mathcal{S}}$. Then for all $\delta>0$ there exists an $M>1$ such that if $x\in\mathcal{P}\setminus \Psi^{-1}(U_M)$ then $\min z \leq \delta$.
\end{lemma}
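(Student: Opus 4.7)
The plan is to argue directly by unfolding the definitions, after noticing one very useful identity. For any two complexes $i,j \in \mathcal{C}$, the quantities $z_j/z_i$ and $x^{y_j}/x^{y_i}$ coincide, because the common factor $\sum_{k\in\mathcal{C}}x^{y_k}$ cancels. Hence the condition $\Psi(x)\in U_M$, which reads $\frac{1}{M}\leq \frac{x^{y_j}}{x^{y_i}}\leq M$ for every $(i,j)\in\mathcal{R}'$, is the same as
\begin{equation*}
\frac{1}{M}\leq \frac{z_j}{z_i}\leq M \quad\text{for every }(i,j)\in\mathcal{R}'.
\end{equation*}
Therefore $x\in\mathcal{P}\setminus\Psi^{-1}(U_M)$ means precisely that some edge $(i,j)\in\mathcal{R}'$ violates one of these inequalities.

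Given $\delta>0$, I would choose $M:=\max(2,1/\delta)$, so that $M>1$ and $1/M\leq\delta$. Suppose $x\in\mathcal{P}\setminus\Psi^{-1}(U_M)$, and fix an edge $(i,j)\in\mathcal{R}'$ whose coordinate of $\Psi(x)$ escapes $[1/M,M]$. Recall that $z\in\Delta^\circ$ forces $0<z_k<1$ for every $k$. If $z_j/z_i>M$, then combining $z_j\leq 1$ with $z_j>Mz_i$ yields $z_i<1/M$, hence $\min z\leq z_i\leq 1/M\leq \delta$. If instead $z_j/z_i<1/M$, then $z_j<z_i/M\leq 1/M\leq\delta$, and again $\min z\leq \delta$. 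Either way the conclusion holds.

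There is essentially no obstacle in this argument; the content is concentrated in the observation that the sum $\sum_{k\in\mathcal{C}}x^{y_k}$ cancels when one forms ratios, so bounds on ratios of monomials (the information $\Psi$ carries) are equivalent to bounds on ratios of the simplex-normalized $z_i$'s. The only minor care is the edge case $\delta\geq 1$, which is handled trivially since $\min z\leq 1/m<1\leq\delta$ on $\Delta^\circ$; this is why the floor $M\geq 2$ is imposed to keep $M>1$ honest. Notably, the single linkage class hypothesis plays no role in this lemma beyond providing the ambient setting in which it will be applied.
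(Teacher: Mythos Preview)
Your proof is correct and follows essentially the same approach as the paper: both argue that if some coordinate of $\Psi(x)$ lies outside $[1/M,M]$ then some ratio $z_j/z_i$ is large, which together with $z\in\Delta^\circ$ forces the smaller of $z_i,z_j$ to be at most of order $1/M$. The only cosmetic differences are that the paper bounds $z_i\leq 1/(M+1)$ using $z_i+z_j\leq 1$ (and implicitly swaps $i,j$ when the ratio is below $1/M$), whereas you use $z_j\leq 1$ and treat the two cases separately.
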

\begin{proof}
For $\delta>0$ let $M>1$ so big that $\frac{1}{M+1}\leq\delta$ holds and take $x\in\mathcal{P}\setminus \Psi^{-1}(U_M)$. Then there exists $i,j\in\mathcal{C}$ such that $\frac{z_j}{z_i}\geq M$. Since $z_1+\cdots+z_m=1$, it follows that $z_i \leq \frac{1}{M+1}$. Thus, $\min z \leq z_i \leq\frac{1}{M+1}\leq\delta$. This concludes the proof.
\end{proof}

We now prove that the classical Horn-Jackson function is a Lyapunov function around infinity. Thus, we reprove \cite[Theorem 3.12]{anderson:2011b}.
\begin{lemma} \label{lem:boundedness}
Let $(\mathcal{X},\mathcal{C},\mathcal{R},\kappa(\tau))$ be a weakly reversible, single linkage class mass-action system with bounded kinetics and fix a positive stoichiometric class $\mathcal{P}$. Let us define $V:\mathbb{R}^n_{\geq0}\to\mathbb{R}_{\geq0}$ by
\begin{align*}
V(x) = \sum_{s=1}^n x_s(\log x_s -1)+1 \text{ for } x \in \mathbb{R}^n_{\geq0}.
\end{align*}
Then
\begin{itemize}
\item[(i)] there exists a $K>0$ such that $\frac{\diff}{\diff \tau}V(x(\tau))<0$ whenever $\tau\geq0$ is such that $x(\tau)\in\mathcal{P}$ and $\max x(\tau)\geq K$,
\item[(ii)] all solutions are bounded.
\end{itemize}
\end{lemma}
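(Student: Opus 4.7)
My plan is to compute $\frac{\diff}{\diff\tau}V(x(\tau))$ along \eqref{eq:ODE} and rewrite it as a positive multiple of the function $g(\tau,z)$ from \cref{lem:negative}, after which \cref{lem:negative,lem:U_M,lem:birch} chain together to give both parts of the lemma. Since $\frac{\partial V}{\partial x_s}(x)=\log x_s$ for each $s$, the chain rule applied to \eqref{eq:ODE} yields
\begin{align*}
\frac{\diff}{\diff\tau}V(x(\tau))=\sum_{(i,j)\in\mathcal{R}}\kappa_{ij}(\tau)\,x(\tau)^{y_i}\,\langle\log x(\tau),\,y_j-y_i\rangle.
\end{align*}
Writing $T=\sum_{k\in\mathcal{C}}x^{y_k}$ and $z_i=x^{y_i}/T$ as in \cref{lem:U_M}, one has $\langle\log x,y_j-y_i\rangle=\log x^{y_j}-\log x^{y_i}=\log z_j-\log z_i$ (the $\log T$ cancels) and $x^{y_i}=Tz_i$, so the right-hand side equals $T\cdot g(\tau,z)$.

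For (i), \cref{lem:negative} supplies $\delta>0$ with $g(\tau,z)\leq-1$ whenever $\min z\leq\delta$, and \cref{lem:U_M} supplies $M>1$ so that $x\in\mathcal{P}\setminus\Psi^{-1}(U_M)$ implies $\min z\leq\delta$. By \cref{lem:birch}, $\Psi^{-1}(U_M)$ is a compact subset of $\mathcal{P}$, so the number $K:=1+\max\{\max x':x'\in\Psi^{-1}(U_M)\}$ is finite. Chaining the three implications, $\max x(\tau)\geq K$ forces $x(\tau)\notin\Psi^{-1}(U_M)$, hence $\min z(\tau)\leq\delta$, hence $g(\tau,z(\tau))\leq-1$, and therefore $\frac{\diff}{\diff\tau}V(x(\tau))\leq-T<0$.

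For (ii), $V$ is coercive on $\mathbb{R}^n_{\geq0}$ (since $x_s(\log x_s-1)\to\infty$ as $x_s\to\infty$), so it suffices to bound $V(x(\tau))$ along each trajectory. Let $C=\max\{V(x'):x'\in\mathbb{R}^n_{\geq0},\,\max x'\leq K\}$ and $M_0=\max\{V(x(0)),C\}$. If $V(x(\cdot))$ ever exceeded $M_0$, letting $\tau_1$ be the last time before such an excursion with $V(x(\tau_1))=M_0$ would produce an interval $(\tau_1,\tau_2]$ on which $V(x(\tau))>M_0\geq C$; the definition of $C$ then forces $\max x(\tau)>K$ on this interval, and (i) makes $V(x(\tau))$ strictly decreasing there, contradicting $V(x(\tau_2))>M_0=V(x(\tau_1))$. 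Hence $V(x(\tau))\leq M_0$ for all $\tau\geq0$, and coercivity yields boundedness.

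The main obstacle is really just organizing the three preceding lemmas into a chain: the identity $\frac{\diff}{\diff\tau}V(x(\tau))=T\cdot g(\tau,z)$ is what makes \cref{lem:negative} applicable to the Horn--Jackson function, while \cref{lem:U_M,lem:birch} translate ``$\min z$ small'' into ``$\max x$ large'' within the stoichiometric class. Once (i) is in place, the sub-level-set argument for (ii) is routine.
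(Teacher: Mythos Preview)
Your proof is correct and follows essentially the same route as the paper: you compute $\frac{\diff}{\diff\tau}V(x(\tau))=T\cdot g(\tau,z)$, then chain \cref{lem:negative}, \cref{lem:U_M}, and the compactness of $\Psi^{-1}(U_M)$ from \cref{lem:birch} to obtain (i), and deduce (ii) from a sublevel-set argument. The only cosmetic differences are that the paper disposes of the bounded-$\mathcal{P}$ case separately (your argument handles it uniformly) and states (ii) more tersely.
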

\begin{proof}
In case $\mathcal{P}$ is bounded, both \emph{(i)} and \emph{(ii)} hold trivially. Assume for the rest of this proof that $\mathcal{P}$ is unbounded.

Statement \emph{(ii)} follows from \emph{(i)}. Indeed, take any such sublevel set of $V$ that contains both the initial condition and the cube $(0,K]^n$. The solution will stay in this bounded sublevel set for all $\tau\geq0$.

It remains to show \emph{(i)}. Note that
\begin{align*}
&\frac{\diff}{\diff \tau}V(x(\tau))=\\
&=\left\langle \log x(\tau), \sum_{(i,j)\in\mathcal{R}} \kappa_{ij}(\tau)x(\tau)^{y_i}(y_j - y_i)\right\rangle =\\
&= \sum_{(i,j)\in\mathcal{R}} \kappa_{ij}(\tau)x(\tau)^{y_i}(\log x(\tau)^{y_j} - \log x(\tau)^{y_i})= \\
&=\left(\sum_{k\in\mathcal{C}}x(\tau)^{y_k}\right)\sum_{(i,j)\in\mathcal{R}} \kappa_{ij}(\tau)\frac{x(\tau)^{y_i}}{\sum_{k\in\mathcal{C}}x(\tau)^{y_k}}\left(\log \frac{x(\tau)^{y_j}}{\sum_{k\in\mathcal{C}}x(\tau)^{y_k}} - \log \frac{x(\tau)^{y_i}}{\sum_{k\in\mathcal{C}}x(\tau)^{y_k}}\right)=\\
&=\left(\sum_{k\in\mathcal{C}}x(\tau)^{y_k}\right)\underbrace{\sum_{(i,j)\in\mathcal{R}} \kappa_{ij}(\tau)z_i(\tau)(\log z_j(\tau) - \log z_i(\tau))}_{g(\tau,z(\tau))},
\end{align*}
where $z_i(\tau)$ is a shorthand for $\frac{x(\tau)^{y_i}}{\sum_{k\in\mathcal{C}}x(\tau)^{y_k}}$. By applying \cref{lem:negative}, let $\delta>0$ be so small that $g(\tau,z(\tau))<0$ whenever $\min z \leq \delta$. Let $\Psi:\mathcal{P} \to \mathbb{R}^{\dim\mathcal{S}}_+$ be the homeomorphism in \cref{lem:birch} and, by applying \cref{lem:U_M}, let $M>1$ be so big that if $x\in\mathcal{P}\setminus \Psi^{-1}(U_M)$ then $\min z \leq \delta$. To conclude the proof of \emph{(i)}, choose $K>0$ so big that the cube $(0,K]^n$ covers the compact set $\Psi^{-1}(U_M)$.
\end{proof}

In a similar way one can show that $V$ is a Lyapunov function near the origin. For later use, we prove a more general result. We show that appropriate variants of the classical Horn-Jackson function are Lyapunov functions near the boundary of the positive orthant. This latter fact appeared implicitly in \cite{anderson:2011a} for weakly reversible mass-action systems with a single linkage class and in \cite{gopalkrishnan:miller:shiu:2014} and \cite{anderson:cappelletti:kim:nguyen:2018} for the larger class of strongly endotactic mass-action systems.
\begin{lemma} \label{lem:V_W}
Let $(\mathcal{X},\mathcal{C},\mathcal{R},\kappa(\tau))$ be a weakly reversible, single linkage class mass-action system with bounded kinetics. Fix a positive stoichiometric class $\mathcal{P}$ and an $\overline{x} \in \overline{\mathcal{P}} \cap \partial\mathbb{R}^n_{\geq0}$. Further, let $W = \{s \in \{1,\ldots,n\}~|~\overline{x}_s=0\}$ and define the function $V_W:\overline{\mathcal{P}}\to\mathbb{R}_{\geq0}$ by
\begin{align*}
V_W(x) = \sum_{s\in W} x_s(\log x_s -1)+1 \text{ for } x \in \overline{\mathcal{P}}.
\end{align*}
Then there exists a neighborhood $U$ of $\overline{x}$ in $\overline{\mathcal{P}}$ such that
\begin{align*}
\frac{\diff}{\diff \tau}V_W(x(\tau))<0 \text{ whenever } \tau\geq0 \text{ is such that } x(\tau) \in U\cap\mathcal{P}.
\end{align*}
\end{lemma}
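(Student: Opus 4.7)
The plan is to adapt the proof of \cref{lem:boundedness}: I would decompose $\frac{\diff}{\diff \tau} V_W$ as the Horn--Jackson expression $(\sum_k x^{y_k})\, g(\tau,z)$ handled by \cref{lem:negative}, plus a correction arising from the missing $W^c$-coordinates that must be bounded relative to $\sum_k x^{y_k}$.

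First, using $\partial V_W / \partial x_s = \log x_s$ for $s \in W$ and $0$ otherwise, I would obtain
\begin{align*}
\frac{\diff}{\diff \tau} V_W(x(\tau)) = \sum_{(i,j) \in \mathcal{R}} \kappa_{ij}(\tau)\, x(\tau)^{y_i}\, \bigl(\log x_W(\tau)^{(y_j)_W} - \log x_W(\tau)^{(y_i)_W}\bigr).
\end{align*}
Inserting $\log x_W^{(y_i)_W} = \log x^{y_i} - \log x_{W^c}^{(y_i)_{W^c}}$ inside the difference splits this as
\begin{align*}
\frac{\diff}{\diff \tau} V_W = \Bigl(\sum_{k \in \mathcal{C}} x^{y_k}\Bigr)\, g(\tau, z) - R(x, \tau),
\end{align*}
where $z_i = x^{y_i}/\sum_k x^{y_k}$, $g$ is the function of \cref{lem:negative}, and $R(x, \tau) = \sum_{(i,j) \in \mathcal{R}} \kappa_{ij}(\tau)\, x^{y_i}\, \bigl(\log x_{W^c}^{(y_j)_{W^c}} - \log x_{W^c}^{(y_i)_{W^c}}\bigr)$.

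Next, on any sufficiently small bounded neighborhood $U_0$ of $\overline{x}$ in $\overline{\mathcal{P}}$ the coordinates $x_{W^c}$ stay bounded away from $0$ (since $\overline{x}_{W^c} > 0$), so the $W^c$-logarithmic differences appearing in $R$ are uniformly bounded by some $B > 0$. Combined with the kinetics bound $\kappa_{ij}(\tau) \leq 1/\varepsilon$ and the elementary inequality $\sum_{(i,j) \in \mathcal{R}} x^{y_i} \leq d \sum_{k \in \mathcal{C}} x^{y_k}$, where $d$ is the maximum out-degree in $(\mathcal{C},\mathcal{R})$, this yields a bound $|R(x, \tau)| \leq C \sum_k x^{y_k}$ on $[0, \infty) \times (U_0 \cap \mathcal{P})$ with a constant $C$ depending only on $U_0$, $\varepsilon$, and the network. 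Applying \cref{lem:negative} with $K = C + 1$ gives $\delta > 0$ with $\min z \leq \delta \Rightarrow g(\tau,z) \leq -(C+1)$, and then \cref{lem:U_M} with this $\delta$ yields $M > 1$ such that $x \in \mathcal{P} \setminus \Psi^{-1}(U_M)$ implies $\min z(x) \leq \delta$. Setting $U = U_0 \cap (\overline{\mathcal{P}} \setminus \Psi^{-1}(U_M))$ gives an open neighborhood of $\overline{x}$ in $\overline{\mathcal{P}}$, because $\Psi^{-1}(U_M) \subseteq \mathcal{P}$ is compact while $\overline{x} \in \partial\mathcal{P}$ lies outside it, and for any $x \in U \cap \mathcal{P}$ one concludes
\begin{align*}
\frac{\diff}{\diff \tau} V_W \leq \Bigl(\sum_{k \in \mathcal{C}} x^{y_k}\Bigr)\bigl(g(\tau, z) + C\bigr) \leq -\sum_{k \in \mathcal{C}} x^{y_k} < 0.
\end{align*}

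The main obstacle is the estimate on $R$ in the third paragraph: one must avoid bounding $R$ pointwise, which can fail whenever some $(y_i)_W$ has a negative coordinate and $x^{y_i}$ blows up near $\overline{x}$. The correct strategy is to bound $|R|$ \emph{relative to} $\sum_k x^{y_k}$, because both sides scale the same way in the monomials; the ratio is then controlled purely by the maximum out-degree and by the (continuous, hence bounded near $\overline{x}_{W^c}$) $W^c$-logarithmic differences, which is precisely what matches the $(C+1)$-size gap produced by \cref{lem:negative}.
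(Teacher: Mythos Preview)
Your argument is correct but follows a genuinely different route from the paper. The paper does \emph{not} split off an additive remainder $R$; instead it absorbs the $W^c$-factors into modified rate constants $\overline{\kappa}_{ij}(\tau)=\kappa_{ij}(\tau)\,x_{W^c}(\tau)^{(y_i)_{W^c}}$ and passes to the \emph{projected} mass-action system on species set $W$, with complexes $\{y_W:y\in\mathcal{C}\}$ and monomials $x_W^{(y_i)_W}$. In that setting $\frac{\diff}{\diff\tau}V_W$ equals exactly $\bigl(\sum_k x_W^{(y_k)_W}\bigr)\,g(\tau,z)$ with $z_i=x_W^{(y_i)_W}/\sum_k x_W^{(y_k)_W}$, and \cref{lem:negative,lem:birch,lem:U_M} are reapplied to the projected network via a map $\Psi_W:\mathcal{P}_W\to\mathbb{R}^{\dim\mathcal{P}_W}_+$; the price is having to verify that the projected complexes are not all identical (which the paper argues from $W$ being the zero-set of a point of $\overline{\mathcal{P}}\cap\partial\mathbb{R}^n_{\geq0}$). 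Your approach stays with the original network and the original $\Psi$, so \cref{lem:negative,lem:U_M} apply verbatim and the compact set $\Psi^{-1}(U_M)\subseteq\mathcal{P}$ automatically misses $\overline{x}\in\partial\mathbb{R}^n_{\geq0}$; the price is the extra remainder $R$ and the need for the quantitative conclusion of \cref{lem:negative} (choosing $K=C+1$ rather than merely $g<0$). Both routes rely on the same three lemmata and yield the same neighborhood structure; the paper's projection gives a cleaner identity with no error term, while your decomposition avoids introducing the projected system altogether.
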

\begin{proof}
Note that
\begin{align*}
\frac{\diff}{\diff x_s}V_W(x) = \begin{cases} \log x_s & \text{ if } s \in W, \\ 0 & \text{ if } s \in W^c. \end{cases}
\end{align*}
Thus,
\begin{align*}
&\frac{\diff}{\diff \tau}V_W(x(\tau))=\\
&= \sum_{(i,j)\in\mathcal{R}} \kappa_{ij}(\tau)x_{W^c}(\tau)^{y^i_{W^c}}x_W(\tau)^{y^i_W}(\log x_W(\tau)^{y^j_W} - \log x_W(\tau)^{y^i_W})= \\
&=\left(\sum_{k\in\mathcal{C}}x_W(\tau)^{y^k_W}\right)\underbrace{\sum_{(i,j)\in\mathcal{R}} \overline{\kappa}_{ij}(\tau)z_i(\tau)(\log z_j(\tau) - \log z_i(\tau))}_{g(\tau,z(\tau))},
\end{align*}
where $\overline{\kappa}_{ij}(\tau) = \kappa_{ij}(\tau)x_{W^c}(\tau)^{y^i_{W^c}}$ and $z_i(\tau)$ is a shorthand for $\frac{x_W(\tau)^{y^i_W}}{\sum_{k\in\mathcal{C}}x_W(\tau)^{y^k_W}}$.

Fix $\omega>0$ such that $\overline{x}_s-\omega>0$ for all $s \in W^c$. Let $\overline{\varepsilon}>0$ be so small that
\begin{align*}
\overline{\varepsilon} \leq \varepsilon (x_{W^c})^{y^i_{W^c}} \text{ and }\frac{1}{\varepsilon}(x_{W^c})^{y^i_{W^c}}\leq \frac{1}{\overline{\varepsilon}}
\end{align*}
for all $i\in \mathcal{C}$ and for all $x \in \mathbb{R}^n_+$ with the property that $x_{W^c} \in \times_{s\in W^c}[\overline{x}_s-\omega,\overline{x}_s+\omega]$. Then $\overline{\varepsilon} \leq \overline{\kappa}_{ij}(\tau) \leq \frac{1}{\overline{\varepsilon}}$ whenever $\tau\geq0$ is such that $\overline{x}_s - \omega \leq x_s(\tau) \leq \overline{x}_s + \omega$ for all $s \in W^c$.

We conclude the proof similarly to the proof of \cref{lem:boundedness} -- with the subtle difference that we consider the \emph{projected} mass-action system. For the projected network, the set of species is $W$, the set of complexes is $\{y_W~|~y\in\mathcal{C}\}$, and the set of reactions is $\mathcal{R}$. Since $W$ is not an arbitrary nonempty subset of $\mathcal{X}$, but the complement of the support of a point in $\overline{\mathcal{P}}\cap\partial\mathbb{R}^n_{\geq0}$, cannot all the complexes in the projected network be identical. The rate constants of the projected mass-action system are $\overline{\kappa}(\tau)$ as defined above. Although $\overline{\kappa}(\tau)$ is not necessarily bounded away from zero and infinity for all $\tau\geq0$, it is between two bounds whenever the solution is close to $\overline{x}$, and this is enough. Our goal is to show that $V_W$ serves as a Lyapunov function around the origin for the projected mass-action system.

In what follows, the index $W$ refers to the corresponding quantity of the projected mass-action system. Thus, $\mathcal{P}_W$ is the positive stoichiometric class whose closure contains the origin of $\mathbb{R}^W$. By applying \cref{lem:negative}, let $\delta>0$ be so small that $g(\tau,z(\tau))<0$ whenever $\min z(\tau) \leq \delta$. Let $\Psi_W:\mathcal{P}_W \to \mathbb{R}^{\dim \mathcal{P}_W}_+$ be the homeomorphism in \cref{lem:birch} and, by applying \cref{lem:U_M}, let $M>1$ be so big that if $x\in\mathcal{P}_W\setminus \Psi_W^{-1}(U_M)$ then $\min z \leq \delta$. To conclude the proof, choose a neighborhood $U$ of $\overline{x}$ in $\overline{\mathcal{P}}$ such that $|x_s-\overline{x}_s|\leq\omega$ for all $s \in W^c$ and the projection of $U$ to the species $W$ is disjoint from the compact set $\Psi_W^{-1}(U_M)$.
\end{proof}


The contents of next two lemmata are that the trajectories are uniformly bounded and the origin is uniformly repelling, respectively. The proofs are analogous, we detail only the second one.
\begin{lemma} \label{lem:unif_boundedness}
Let $(\mathcal{X},\mathcal{C},\mathcal{R},\kappa(\tau))$ be a weakly reversible, single linkage class mass-action system with bounded kinetics and unbounded positive stoichiometric classes. Fix a positive stoichiometric class $\mathcal{P}$ and let $V:\mathbb{R}^n_{\geq0}\to\mathbb{R}_{\geq0}$ be as in \cref{lem:boundedness}. For a $c>n$, let $U^{>c} = \{x \in \mathcal{P} ~|~ V(x)>c\}$ and $U^{=c} = \{x \in \mathcal{P} ~|~ V(x)=c\}$. Then there exists a $c>n$ such that
\begin{itemize}
\item[(i)] $\frac{\diff}{\diff \tau}V(x(\tau))<0$ whenever $\tau\geq0$ is such that $x(\tau) \in U^{>c}$,
\item[(ii)] the set $\mathcal{P}\setminus U^{>c}$ is bounded and forward invariant,
\item[(iii)] for all $\tau\geq0$ with $x(\tau) \in U^{>c}$, there exists a $\tau'\geq\tau$ such that $x(\tau')\in U^{=c}$.
\end{itemize}
\end{lemma}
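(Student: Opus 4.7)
The plan is to choose $c$ to satisfy two compatible requirements. First I apply \cref{lem:boundedness}(i) to get a $K_0 > 0$ with $\frac{\diff}{\diff \tau}V(x(\tau)) < 0$ whenever $\max x(\tau) \geq K_0$; since $\{x \in \overline{\mathcal{P}} : \max x \leq K_0\}$ is compact in $\mathbb{R}^n_{\geq 0}$, the quantity $c_0 := \sup V$ on this set is finite. Next, to upgrade this to a quantitative decay estimate, I fix a large $K' > 0$, invoke \cref{lem:negative} for a $\delta > 0$ with $g(\tau,z) \leq -K'$ whenever $\min z \leq \delta$, and then \cref{lem:U_M} for an $M > 1$ with $x \in \mathcal{P} \setminus \Psi^{-1}(U_M)$ implying $\min z \leq \delta$. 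Since $\Psi^{-1}(U_M)$ is compact in $\mathcal{P}$, $c_1 := \max V$ on it is finite, and I take any $c > \max(n, c_0, c_1)$.

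Property~(i) is then immediate: any $x \in U^{>c}$ satisfies $V(x) > c_0$, so $\max x > K_0$ and $\frac{\diff}{\diff \tau}V(x(\tau)) < 0$ by \cref{lem:boundedness}(i); moreover, since $V(x) > c_1$ forces $x \notin \Psi^{-1}(U_M)$ and hence $g(\tau, z(\tau)) \leq -K'$, the computation in the proof of \cref{lem:boundedness} yields along any forward orbit lying in $U^{>c}$ the sharper bound
\begin{align*}
-\frac{\diff}{\diff \tau}V(x(\tau)) \;=\; \Bigl(\sum_{k \in \mathcal{C}} x(\tau)^{y_k}\Bigr)(-g(\tau, z(\tau))) \;\geq\; K' \sum_{k \in \mathcal{C}} x(\tau)^{y_k}.
\end{align*}
For~(ii), boundedness of $\mathcal{P} \setminus U^{>c}$ follows from the properness of $V$ on $\mathbb{R}^n_{\geq 0}$, while forward invariance is a standard first-exit argument: if a solution starting with $V \leq c$ attained $V > c$ at some $\tau_1 > \tau_0$, then the supremum $\sigma$ of times $\tau \in [\tau_0, \tau_1]$ with $V(x(\tau)) \leq c$ would satisfy $V(x(\sigma)) = c$ by continuity; on $(\sigma, \tau_1]$ the orbit lies in $U^{>c}$ where $\frac{\diff}{\diff \tau}V(x(\tau)) < 0$ by~(i), forcing $V(x(\tau_1)) < V(x(\sigma)) = c$, a contradiction.

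For~(iii) --- the main obstacle --- I argue by contradiction and suppose $x(\tau) \in U^{>c}$ for all $\tau \geq \tau_0$. Then $\phi(\tau) := V(x(\tau))$ is strictly decreasing, bounded below by $c$, and converges to some $L \geq c$; integrating the sharper bound gives $\int_{\tau_0}^\infty \sum_{k \in \mathcal{C}} x(\tau)^{y_k}\, \diff \tau \leq (\phi(\tau_0) - L)/K' < \infty$. Since the forward orbit is bounded by \cref{lem:boundedness}(ii), it has a limit point $\overline{x} \in \overline{\mathcal{P}}$, and continuity of $V$ gives $V(\overline{x}) = L$. If $\overline{x} \in \mathcal{P}$, then $\sum_{k} \overline{x}^{y_k} > 0$ is bounded below on a neighborhood of $\overline{x}$; the orbit, which revisits that neighborhood infinitely often with each visit of uniformly positive duration (because $\dot{x}$ is uniformly bounded), would force the integral above to diverge, a contradiction. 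If $\overline{x} \in \partial \mathcal{P}$, I invoke \cref{lem:V_W} at $\overline{x}$ and run the parallel argument with $V_W$ playing the role of $V$ in a neighborhood of $\overline{x}$; this is the delicate part, but it is analogous to the proof of \cref{lem:unif_repel_origin} (which the paper details in full), combining the uniform decay of $V_W$ along orbits entering that neighborhood, the recurrence of those entries, and the global lower bound on $V_W$ to reach a contradiction.
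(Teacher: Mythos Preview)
Your proof is correct and aligns with the paper's treatment: the paper gives no separate argument for this lemma, declaring it analogous to \cref{lem:unif_repel_origin}, and you do exactly the same for the delicate boundary-accumulation case (your Case~2), correctly pointing to the maximal-$W$ argument detailed there. Your explicit choice $c>\max(n,c_0,c_1)$, which makes the quantitative bound $\frac{\diff}{\diff\tau}V\le -K'\sum_{k\in\mathcal C} x^{y_k}$ available on all of $U^{>c}$, and the resulting integral argument for interior accumulation points (Case~1) are a clean minor variation; in the paper's intended structure that case is instead absorbed once boundary accumulation is excluded, since the orbit closure is then compact in $\mathcal P$ and the same quantitative bound gives a uniform $\frac{\diff}{\diff\tau}V<-L$ directly.
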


\begin{lemma} \label{lem:unif_repel_origin}
Let $(\mathcal{X},\mathcal{C},\mathcal{R},\kappa(\tau))$ be a weakly reversible, single linkage class mass-action system with bounded kinetics. Consider the positive stoichiometric class $\mathcal{P}$ for which $0\in\overline{\mathcal{P}}$ (if such a $\mathcal{P}$ exists). Let $V:\mathbb{R}^n_{\geq0}\to\mathbb{R}_{\geq0}$ be as in \cref{lem:boundedness}. For a $c\geq0$, let
\begin{align*}
U^{>c}     &= \{x \in \mathcal{P} ~|~ V(x)>c     \text{ and }\max x\leq1\},\\
U^{\geq c} &= \{x \in \mathcal{P} ~|~ V(x)\geq c \text{ and }\max x\leq1\},\\
U^{=c}     &= \{x \in \mathcal{P} ~|~ V(x)=c     \text{ and }\max x\leq1\}.
\end{align*}
Then there exists a $c\geq0$ such that
\begin{itemize}
\item[(i)] $U^{>c}$ is nonempty,
\item[(ii)] $0 \in \overline{U^{>c}}$,
\item[(iii)] $\frac{\diff}{\diff \tau}V(x(\tau))<0$ whenever $\tau\geq0$ is such that $x(\tau) \in U^{\geq c}$,
\item[(iv)] the set $\mathcal{P}\setminus U^{>c}$ is forward invariant,
\item[(v)] for all $\tau_0\geq0$ with $x(\tau_0) \in U^{>c}$, there exists a $\tau\geq\tau_0$ such that $x(\tau)\in U^{=c}$.
\end{itemize}
\end{lemma}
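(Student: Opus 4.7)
The plan is to apply \cref{lem:V_W} with $\overline{x}=0$, which lies in $\overline{\mathcal{P}}$ by assumption. Since then $W=\{1,\ldots,n\}$, $V_W$ coincides with $V$, and the lemma furnishes a neighborhood $U$ of $0$ in $\overline{\mathcal{P}}$ on which $V$ acts as a strict Lyapunov function: $\frac{\diff}{\diff\tau}V(x(\tau))<0$ whenever $x(\tau)\in U\cap\mathcal{P}$. The remaining task is to choose $c$ so that $U^{\geq c}\subseteq U\cap\mathcal{P}$ and, moreover, so that within $\mathcal{P}$ the topological boundary of $U^{>c}$ consists only of $U^{=c}$, with no contribution from the face $\{\max x=1\}$.

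Two elementary facts about $V$ dictate the choice. Because $t\mapsto t(\log t-1)$ vanishes at $t=0$ and is strictly negative on $(0,1]$, one has $V(x)\leq 1$ on $\{\max x\leq 1\}$ with equality only at $x=0$; and if some coordinate of $x$ equals $1$, that coordinate alone contributes $-1$ to the sum, while all other summands are $\leq 0$, so $V(x)\leq 0$ there. The first fact, combined with compactness of $\{x\in\overline{\mathcal{P}}:\max x\leq 1\}$, shows that the nested compact family $A_{c'}=\{x\in\overline{\mathcal{P}}:V(x)\geq c',\ \max x\leq 1\}$ shrinks to $\{0\}$ as $c'\to 1^-$, so some $c\in(0,1)$ satisfies $A_c\subseteq U$.

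With this $c$, items $(i)$ and $(ii)$ follow at once from continuity of $V$ at $0$ together with $0\in\overline{\mathcal{P}}$, and $(iii)$ is immediate from $U^{\geq c}\subseteq A_c\cap\mathcal{P}\subseteq U\cap\mathcal{P}$ and \cref{lem:V_W}. The second elementary fact, together with $c>0$, gives $\{V\geq c\}\cap\{\max x=1\}=\emptyset$, so within $\mathcal{P}$ the boundary of $U^{>c}$ is exactly $U^{=c}$. Any trajectory can only cross between $U^{>c}$ and its complement via $U^{=c}$, and there $(iii)$ forces $V$ to be strictly decreasing; hence $V$ can cross the level $c$ only downward, yielding $(iv)$. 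For $(v)$, $(iii)$ provides strict decrease along any trajectory remaining in $U^{>c}$, and since $\overline{U^{>c}}$ is bounded, a routine compactness argument gives a uniform negative upper bound on $\frac{\diff}{\diff\tau}V$ on $\overline{U^{>c}}$, so $V$ must reach $c$ in finite time; at the first such moment the trajectory sits on $U^{=c}$.

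The main obstacle I anticipate is $(iv)$: a priori, a trajectory could slip into $U^{>c}$ through the face $\{\max x=1\}$ rather than by crossing $U^{=c}$ upward. The short computation showing $V\leq 0$ whenever $\max x=1$ rules this out for any $c>0$, after which $(iv)$ and $(v)$ reduce to a standard level-set argument analogous to the one used for \cref{lem:unif_boundedness}.
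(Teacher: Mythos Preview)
Your treatment of $(i)$--$(iv)$ is correct and matches the paper's (modulo the reading of the ``$+1$'' in the definition of $V$: the paper places it inside the sum, so $V(0)=n$ rather than $1$, but your argument goes through verbatim with $c$ chosen just below $n$ and above $n-1$).

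The gap is in $(v)$. Your ``routine compactness argument'' for a uniform negative upper bound on $\frac{\diff}{\diff\tau}V$ over $\overline{U^{>c}}$ does not work, because this closure (taken in $\mathbb{R}^n_{\geq0}$) contains points of $\partial\mathbb{R}^n_{\geq0}$ other than the origin, and near such points the Lie derivative of $V$ need not be bounded away from zero. Concretely, for the single--linkage--class network $X\to 2X\to X+Y\to X$ one has $\mathcal{P}=\mathbb{R}^2_+$ and
\[
\frac{\diff}{\diff\tau}V \;=\; x\Bigl[(\kappa_1-\kappa_2 x)\log x + (\kappa_2 x - \kappa_3 y)\log y\Bigr],
\]
which tends to $0$ as $(x,y)\to(0,\overline{y})$ for any fixed $\overline{y}\in(0,1)$; such points lie in $\overline{U^{>c}}$ when $c$ is close to $V(0)$. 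Thus no uniform $L>0$ with $\frac{\diff}{\diff\tau}V\leq -L$ on $U^{>c}$ is available, and a trajectory could in principle remain in $U^{>c}$ for all time while $V$ decreases ever more slowly toward some limit strictly above $c$.

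The paper's proof of $(v)$ is accordingly substantially more involved. It first rules out accumulation of the trajectory at \emph{any} point $\overline{x}\in\overline{\mathcal{P}}\cap\partial\mathbb{R}^n_{\geq0}$: choosing the zero-set $W$ of such an $\overline{x}$ maximal among accumulation points, one notes $W\neq\{1,\dots,n\}$ (by $(iii)$ the trajectory cannot approach $0$), extracts a subsequence along which $V_W$ increases toward $V_W(\overline{x})$, uses the mean value theorem to produce intermediate times $\tau_k'$ with $\frac{\diff}{\diff\tau}V_W\geq0$, passes to a limit $x'$ with the same zero-set $W$, and then invokes \cref{lem:V_W} at $x'$ to get $\frac{\diff}{\diff\tau}V_W<0$ nearby, a contradiction. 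Only after the trajectory closure is shown to lie compactly in $\mathcal{P}$ does a genuine compactness argument yield the uniform bound and hence the finite exit time. Your proposal applies \cref{lem:V_W} only at $\overline{x}=0$; the missing idea is that it must be applied at every possible boundary accumulation point, with the corresponding auxiliary function $V_W$ rather than $V$ itself.
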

\begin{proof}
Since $V$ has a local maximum at $0$ and $V(0) = n$, the requirements $(i)$ and $(ii)$ are satisfied for all $c<n$. If $c<n$ is close enough to $n$ then $(iii)$ follows from \cref{lem:V_W}. The property $(iv)$ follows immediately from $(iii)$.

To prove $(v)$, let $\tau_0\geq0$ with $x(\tau_0) \in U^{>c}$, where $c<n$ is such that $(i)$, $(ii)$, $(iii)$, and $(iv)$ hold. Assume by contradiction that there exists an $\overline{x} \in \overline{\mathcal{P}}\cap\partial\mathbb{R}^n_{\geq0}$ and $\tau_0\leq \tau_1 < \tau_2 < \cdots$ such that $x(\tau_k)\in U^{>c}$ converges to $\overline{x}$ as $k\to\infty$. Let $W=\{s \in \{1,\ldots,n\}~|~\overline{x}_s=0\}$. W.l.o.g., choose the accumulation point $\overline{x}$ such that $W$ is maximal. Note that $W \neq \{1,\ldots,n\}$ by $(iii)$. Since $V_W$ is continuous and $V_W(\overline{x}) > V_W(x)$ for all $x\in U^{>c}$, we can choose a subsequence, again denoted by $(\tau_k)_{k\geq1}$, such that $V_W(x(\tau_k))$ increases monotonically to $V_W(\overline{x})$ as $k\to \infty$. Then for each $k\geq 1$ there exists a $\tau_k'$ such that $\tau_k<\tau_k'<\tau_{k+1}$, $V_W(x(\tau_k))\leq V_W(x(\tau_k'))\leq V_W(\overline{x})$, and $\frac{\diff}{\diff \tau}V_W(x(\tau_k'))\geq0$. Now pick a subsequence, again denoted by $(\tau_k')_{k\geq1}$, such that $x(\tau_k')$ converges to some point $x'$ as $k\to \infty$. Because $V_W(\overline{x}) = \lim_{k\to\infty} V_W(x(\tau_k')) = V_W(x')$, we have $x'_s=0$ if $s\in W$. In fact, because of the maximality of $W$, $x'_s=0$ if and only if $s\in W$. By \cref{lem:V_W}, applied to $x'$, we get that $\frac{\diff}{\diff \tau}V_W(x(\tau_k'))<0$ for large enough $k$, a contradiction.

Therefore, the trajectory has no accumulation points in $\overline{\mathcal{P}}\cap\partial\mathbb{R}^n_{\geq0}$. Now assume by contradiction that for all $\tau\geq\tau_0$ we have $x(\tau)\in U^{>c}$. Thus, the closure of the trajectory $(x(\tau))_{\tau\geq\tau_0}$, is a compact set disjoint from the boundary of $\mathbb{R}^n_{\geq0}$. Hence, by \cref{lem:V_W}, there exists an $L>0$ such that $\frac{\diff}{\diff \tau}V(x(\tau))<-L$ whenever $\tau\geq\tau_0$. This guarantees that the solution reaches $U^{=c}$ in finite time, a contradiction. This concludes the proof of $(v)$.
\end{proof}

\section{Proof of \cref{thm:main}} \label{sec:proof_main}

In this section, we prove statement $(i)$ of \cref{thm:main}. As we already mentioned, statement $(ii)$ then follows immediately.

Fix a positive stoichiometric class $\mathcal{P}$. We will construct a compact and forward invariant set $K\subseteq\mathcal{P}$ with the property that each solution starting in $\mathcal{P}$ enters $K$ in finite time. The boundary of $\mathcal{P}$ is composed of level set pieces of $V$ and $V_W$ in \cref{lem:boundedness,lem:V_W}, respectively.

If $\mathcal{P}$ is unbounded, pick a $c>n$ as in \cref{lem:unif_boundedness}, and let $U_\infty=U^{>c}$. If $\mathcal{P}$ is bounded, let $U_\infty=\emptyset$.

Pick a vertex $\overline{x}$ of $\overline{\mathcal{P}}$ and let $W = \{s\in\{1,\ldots,n\}~|~\overline{x}_s=0\}$. Our goal is to define a set $U_W\subseteq\mathcal{P}$ with
\begin{itemize}
\item[(i)] $U_W$ is open relative to $\mathcal{P}$,
\item[(ii)] $\overline{U_W}$ is a neighborhood of $\overline{x}$ in $\overline{\mathcal{P}}$,
\item[(iii)] $\mathcal{P}\setminus U_W$ is forward invariant.
\item[(iv)] for all $\tau\geq0$ with $x(\tau)\in U_W$, there exists a $\tau'\geq\tau$ such that $x(\tau')\in \mathcal{P}\setminus U_W$.
\end{itemize}
To this end, as in the proof of \cref{lem:unif_repel_origin}, we consider the \emph{projected} mass-action system. For the projected network, the set of species is $W$, the set of complexes is $\{y_W~|~y\in\mathcal{C}\}$, and the set of reactions is $\mathcal{R}$. Since $W$ is not an arbitrary nonempty subset of $\mathcal{X}$, but the complement of the support of a point in $\overline{\mathcal{P}}\cap\partial\mathbb{R}^n_{\geq0}$, not all the complexes in the projected network can be identical. Let $\mathcal{P}_W$ be the positive stoichiometric class in the projected network, whose closure contains the origin of $\mathbb{R}^W$. The rate constants of the projected mass-action system are $\overline{\kappa}_{ij}(\tau) = \kappa_{ij}(\tau)x_{W^c}(\tau)^{y^i_{W^c}}$ for $(i,j)\in\mathcal{R}$. By \cref{lem:boundedness}, $\overline{\kappa}(\tau)$ is bounded away from infinity. And, though $\overline{\kappa}(\tau)$ is not necessarily bounded away from zero, it is between two bounds whenever the solution of the original system is close to $\overline{x}$.

Now apply \cref{lem:unif_repel_origin} to the projected system and pick a $c\geq0$ such that $U^{>c}\subseteq\mathcal{P}_W$ has the five properties in that lemma. Then the set $U_W=\{x\in\mathcal{P}~|~x_W\in U^{>c}\}$ satisfies the four requirements above.

Denote by $\mathcal{W}_0$ the set of those $W$, which correspond to vertices of $\overline{\mathcal{P}}$. Let
\begin{align*}
\mathcal{P}_1=\mathcal{P} \setminus \left(U_\infty \cup \bigcup_{W\in\mathcal{W}_0} U_W\right).
\end{align*}

Pick now an edge $e$ of $\overline{\mathcal{P}}$ and let $\overline{e} = e \cap \mathcal{P}_1$. Pick an $\overline{x}\in\overline{e}$ and let $W = \{s\in\{1,\ldots,n\}~|~\overline{x}_s=0\}$ (note that $W$ does not depend on the choice of $\overline{x}$). Our goal is to define a set $U_W\subseteq\mathcal{P}$ with
\begin{itemize}
\item[(i)] $U_W$ is open relative to $\mathcal{P}$,
\item[(ii)] $\overline{U_W}$ is a neighborhood of $\overline{e}$ in $\overline{\mathcal{P}}$,
\item[(iii)] $\mathcal{P}_1\setminus U_W$ is forward invariant.
\item[(iv)] for all $\tau\geq0$ with $x(\tau)\in U_W$, there exists a $\tau'\geq\tau$ such that $x(\tau')\in \mathcal{P}_1\setminus U_W$.
\end{itemize}
To this end, we consider the \emph{projected} mass-action system, similarly as above. In a neighborhood of $\overline{e}$, the rate constants of the projected mass-action system are bounded away from zero, and thus, \cref{lem:unif_repel_origin} applies to the projected system. Similarly as above, we get the set $U_W$ that satisfies the four requirements.

Denote by $\mathcal{W}_1$ the set of those $W$, which correspond to edges of $\overline{\mathcal{P}}$. Let
\begin{align*}
\mathcal{P}_2=\mathcal{P}_1 \setminus \left(\bigcup_{W\in\mathcal{W}_1} U_W\right).
\end{align*}

Continue this procedure with the two-dimensional faces of $\mathcal{P}$ to get $\mathcal{P}_3$, etc. Finally, the last step produces the compact and forward invariant set $K=\mathcal{P}_{\dim\mathcal{S}}$ in $\mathcal{P}$ that attracts every solution in finite time. This concludes the proof of \cref{thm:main}.


\section{Examples}
\label{sec:examples}

In this section we give examples with two linkage classes that illustrate the limitations of the usage of $V$ (or its variant) as a Lyapunov function.

\subsection{An example with two linkage classes, where $V$ does not work at the origin}\label{subsec:example_origin}

Consider the (time-independent) mass-action system
\begin{center}
\begin{tikzpicture}
\node (A1) at (0,0) {$3\mathsf{X}$};
\node (A2) at (2.5,0) {$2\mathsf{X} + \mathsf{Y}$};
\node (A3) at (5.5,0) {$ \mathsf{X} + 2\mathsf{Y}$};
\node (A4) at (8,0) {$3\mathsf{Y}$};
\draw[arrows={-angle 90},transform canvas={yshift= .25em}] (A1) to node [above] {$2$} (A2);
\draw[arrows={-angle 90},transform canvas={yshift=-.25em}] (A2) to node [below] {$8$} (A1);
\draw[arrows={-angle 90},transform canvas={yshift= .25em}] (A2) to node [above] {$1$} (A3);
\draw[arrows={-angle 90},transform canvas={yshift=-.25em}] (A3) to node [below] {$1$} (A2);
\draw[arrows={-angle 90},transform canvas={yshift= .25em}] (A3) to node [above] {$8$} (A4);
\draw[arrows={-angle 90},transform canvas={yshift=-.25em}] (A4) to node [below] {$2$} (A3);

\node (A5) at (9.5,0) {$4\mathsf{X}$};
\node (A6) at (12,0) {$4\mathsf{X} + \mathsf{Y}$,};
\draw[arrows={-angle 90},transform canvas={yshift= .25em}] (A5) to node [above] {$1$} (A6);
\draw[arrows={-angle 90},transform canvas={yshift=-.25em}] (A6) to node [below] {$1$} (A5);
\end{tikzpicture}
\end{center}
which results in the ODE
\begin{align*}
\dot{x} &= f_1(x,y) = -2x^3 + 7 x^2y -7 xy^2 + 2y^3, \\
\dot{y} &= f_2(x,y) = +2x^3 - 7 x^2y +7 xy^2 - 2y^3 + x^4 - x^4y.\\
\end{align*}

For $(x^*,y^*)\in\mathbb{R}^2_+$, let us define $V_{(x^*,y^*)}:\mathbb{R}^2_{\geq0}\to\mathbb{R}_{\geq0}$, the Horn-Jackson function centered around $(x^*,y^*)$, by
\begin{align*}
V_{(x^*,y^*)}(x,y) = x\left(\log \frac{x}{x^*} -1\right)+x^* + y\left(\log \frac{y}{y^*} -1\right)+y^* \text{ for } (x,y) \in \mathbb{R}^2_{\geq0}.
\end{align*}
Then the time derivative of $V_{(x^*,y^*)}$ along trajectories is
\begin{align*}
&(\grad V_{(x^*,y^*)})(x,y) \left[\begin{array}{c}f_1(x,y)\\f_2(x,y)\end{array}\right] =
\left[\log \frac{x}{x^*},\log \frac{y}{y^*}\right] \left[\begin{array}{c}f_1(x,y)\\f_2(x,y)\end{array}\right] =\\
&=-\left(y-\frac{x}{2}\right)(y-x)(y-2x) \left(\log\frac{y}{y^*} - \log\frac{x}{x^*}\right) + x^4(1-y)\log\frac{y}{y^*}.
\end{align*}
Thus, for each $(x^*,y^*)\in\mathbb{R}^2_+$, every neighborhood of the origin contains $(x,y)\in\mathbb{R}^2_+$ for which the above expression is positive. Thus, the function $V_{(x^*,y^*)}$ cannot serve as a Lyapunov function close to the origin.

On the other hand, since every planar reversible mass-action system is permanent (see \cite{simon:1995}), the example of this subsection is permanent.

\subsection{An example with two linkage classes, where $V$ does not work at infinity}

Consider the (time-independent) mass-action system
\begin{center}
\begin{tikzpicture}
\node (A1) at (0,0) {$3\mathsf{X}$};
\node (A2) at (2.5,0) {$2\mathsf{X} + \mathsf{Y}$};
\node (A3) at (5.5,0) {$ \mathsf{X} + 2\mathsf{Y}$};
\node (A4) at (8,0) {$3\mathsf{Y}$};
\draw[arrows={-angle 90},transform canvas={yshift= .25em}] (A1) to node [above] {$2$} (A2);
\draw[arrows={-angle 90},transform canvas={yshift=-.25em}] (A2) to node [below] {$8$} (A1);
\draw[arrows={-angle 90},transform canvas={yshift= .25em}] (A2) to node [above] {$1$} (A3);
\draw[arrows={-angle 90},transform canvas={yshift=-.25em}] (A3) to node [below] {$1$} (A2);
\draw[arrows={-angle 90},transform canvas={yshift= .25em}] (A3) to node [above] {$8$} (A4);
\draw[arrows={-angle 90},transform canvas={yshift=-.25em}] (A4) to node [below] {$2$} (A3);

\node (A5) at (9.5,0) {$\mathsf{0}$};
\node (A6) at (11,0) {$\mathsf{Y}$,};
\draw[arrows={-angle 90},transform canvas={yshift= .25em}] (A5) to node [above] {$1$} (A6);
\draw[arrows={-angle 90},transform canvas={yshift=-.25em}] (A6) to node [below] {$1$} (A5);
\end{tikzpicture}
\end{center}
which results in the ODE
\begin{align*}
\dot{x} &= f_1(x,y) = -2x^3 + 7 x^2y -7 xy^2 + 2y^3, \\
\dot{y} &= f_2(x,y) = +2x^3 - 7 x^2y +7 xy^2 - 2y^3 + 1 - y.\\
\end{align*}

With $V_{(x^*,y^*)}:\mathbb{R}^2_{\geq0}\to\mathbb{R}_{\geq0}$ being the same function as in \cref{subsec:example_origin}, the time derivative of $V_{(x^*,y^*)}$ along trajectories is
\begin{align*}
&(\grad V_{(x^*,y^*)})(x,y) \left[\begin{array}{c}f_1(x,y)\\f_2(x,y)\end{array}\right] =
\left[\log \frac{x}{x^*},\log \frac{y}{y^*}\right] \left[\begin{array}{c}f_1(x,y)\\f_2(x,y)\end{array}\right] =\\
&=-\left(y-\frac{x}{2}\right)(y-x)(y-2x) \left(\log\frac{y}{y^*} - \log\frac{x}{x^*}\right) + (1-y)\log\frac{y}{y^*}.
\end{align*}
Thus, for each $(x^*,y^*)\in\mathbb{R}^2_+$ and for each $M>0$ there exists $(x,y)$ with $x\geq M$, $y\geq M$ for which the above expression is positive. Thus, the function $V_{(x^*,y^*)}$ cannot serve as a Lyapunov function around infinity.

On the other hand, since every planar reversible mass-action system is permanent (see \cite{simon:1995}), the example of this subsection is permanent.

\subsection{An example, which is complex balanced}

Consider a modification of \cite[Example 9.6]{gopalkrishnan:miller:shiu:2014}, namely, the (time-independent) mass-action system
\begin{center}
\begin{tikzpicture}
\node (A1) at (0,0) {$\mathsf{X}$};
\node (A2) at (2,0) {$\mathsf{Y}$};
\node (A3) at (1,-1.7) {$\mathsf{Y} + \mathsf{Z}$};
\draw[arrows={-angle 90}] (A1) to node [above] {$1$} (A2);
\draw[arrows={-angle 90}] (A2) to node [right] {$1$} (A3);
\draw[arrows={-angle 90}] (A3) to node [left] {$1$} (A1);

\node (A4) at (3.5,-0.85) {$2\mathsf{X}$};
\node (A5) at (5.5,-0.85) {$3\mathsf{Y}$,};
\draw[arrows={-angle 90},transform canvas={yshift= .25em}] (A4) to node [above] {$1$} (A5);
\draw[arrows={-angle 90},transform canvas={yshift=-.25em}] (A5) to node [below] {$1$} (A4);
\end{tikzpicture}
\end{center}
which results in the ODE
\begin{align*}
\dot{x} &= f_1(x,y,z) = - x + yz - 2x^2 + 2y^3, \\
\dot{y} &= f_2(x,y,z) = + x - yz + 3x^2 - 3y^3, \\
\dot{z} &= f_3(x,y,z) = + y - yz.
\end{align*}
The point $(1,1,1)$ is the unique positive equilibrium. Furthermore, this point is a \emph{complex balanced} equilibrium, and thus, according to the \emph{Global Attractor Conjecture} \cite{craciun:dickenstein:shiu:sturmfels:2009}, which is proven for the three species case (see \cite{craciun:nazarov:pantea:2013} and \cite{pantea:2012}), it is globally asymptotically stable, and therefore the system is permanent. However, as we demonstrate below, the method of the present paper fails to prove permanence.

The set of boundary equilibria consists of the points $(0,0,\overline{z})$ with $\overline{z}\geq 0$. For $(x^*,y^*)\in\mathbb{R}^2_+$, let us define $V_{(x^*,y^*)}:\mathbb{R}^3_{\geq0}\to\mathbb{R}_{\geq0}$ by
\begin{align*}
V_{(x^*,y^*)}(x,y,z) = x\left(\log \frac{x}{x^*} -1\right)+x^* + y\left(\log \frac{y}{y^*} -1\right)+y^* \text{ for } (x,y,z) \in \mathbb{R}^3_{\geq0}.
\end{align*}
Then the time derivative of $V_{(x^*,y^*)}$ along trajectories is
\begin{align*}
&(\grad V_{(x^*,y^*)})(x,y,z) \left[\begin{array}{c}f_1(x,y,z)\\f_2(x,y,z)\\f_3(x,y,z)\end{array}\right] =
\left[\log \frac{x}{x^*},\log \frac{y}{y^*},0\right] \left[\begin{array}{c}f_1(x,y,z)\\f_2(x,y,z)\\f_3(x,y,z)\end{array}\right] =\\
&=(x-yz) \left(\log\frac{y}{y^*} - \log\frac{x}{x^*}\right) + (x^2-y^3)\left(\log\left(\frac{y}{y^*}\right)^3 - \log\left(\frac{x}{x^*}\right)^2\right).
\end{align*}
Thus, for each $\overline{z}>0$ and for each $(x^*,y^*)\in\mathbb{R}^2_+$, every neighborhood of $(0,0,\overline{z})$ contains $(x,y,z)\in\mathbb{R}^3_+$ for which the above expression is positive. Thus, the function $V_{(x^*,y^*)}$ cannot serve as a Lyapunov function close to $(0,0,\overline{z})$ with $\overline{z}>0$. The method in \cite{anderson:2011a} therefore cannot be extended in a straightforward manner to systems with multiple linkage classes.

\appendix
\section{The acyclic digraph of the implications} \label{sec:acyclic}

\begin{center}
\begin{tikzpicture}[auto]

\node (A) at ( 0, 0) {\Cref{lem:elementary}};
\node (B) at ( 3, 0) {\Cref{lem:negative}};
\node (C) at ( 0,-2) {\Cref{lem:birch}};
\node (D) at ( 3,-2) {\Cref{lem:U_M}};
\node (E) at ( 6, 0) {\Cref{lem:boundedness}};
\node (F) at ( 6,-2) {\Cref{lem:V_W}};
\node (G) at ( 9, 0) {\Cref{lem:unif_boundedness}};
\node (H) at ( 9,-2) {\Cref{lem:unif_repel_origin}};
\node (I) at (12,-1) {\Cref{thm:main}};

\draw[-{Triangle[open,angle=45:6pt]}] (A) -- (B);
\draw[-{Triangle[open,angle=45:6pt]}] (C) -- (D);
\draw[-{Triangle[open,angle=45:6pt]}] (B) -- (E);
\draw[-{Triangle[open,angle=45:6pt]}] (B) -- (F);
\draw[-{Triangle[open,angle=45:6pt]}] (D) -- (E);
\draw[-{Triangle[open,angle=45:6pt]}] (D) -- (F);
\draw[-{Triangle[open,angle=45:6pt]}] (E) -- (G);
\draw[-{Triangle[open,angle=45:6pt]}] (F) -- (H);
\draw[-{Triangle[open,angle=45:6pt]}] (G) -- (I);
\draw[-{Triangle[open,angle=45:6pt]}] (H) -- (I);

\end{tikzpicture}
\end{center}

\section*{Acknowledgment}
This paper was inspired by discussions with Stefan M\"uller and Georg Regensburger on the stability of complex balanced equilibria.

\bibliographystyle{siamplain}
\bibliography{references}

\end{document}